\newtheorem{theorem}{Theorem}[section]
\newtheorem{proposition}{Proposition}[section]
\newtheorem{property}{Property}[section]
\newtheorem{definition}{Definition}[section]
\newtheorem{remark}{Remark}[section]
\newtheorem{example}{Example}[section]
\begin{document}

\title{Probabilistic Properties of GIG Digraphs}

\author[1]{Chuhan Guo \thanks{Research supported by Davidson Research Initiative}}
\author[1]{Laurie J. Heyer \thanks{Research supported by Davidson Research Initiative and NSF grant MCB-1613203 to Davidson College}} 
\affil[1]{ \normalsize Department of Mathematics and Computer Science\authorcr Davidson College} 
\author[2]{Jeffrey L. Poet \thanks{Research supported by NSF grant MCB-1613281 }}
\affil[2]{\normalsize Department of Computer Science, Mathematics and Physics\authorcr Missouri Western State University}

\maketitle

\begin{abstract}
We study the probabilistic properties of the Greatest Increase Grid (GIG) digraph. We compute the probability of a particular sequence of directed edges connecting two random vertices. We compute the joint probability that a set of vertices are all sinks, and derive the mean and variance in the number of sinks in a randomly labeled GIG digraph. Finally, we show that the expected size of the maximum component of vertices converges.  
\end{abstract}

\section{Introduction}

Local search is a heuristic approach to solve large-scale and computationally challenging global optimization problems. A simple but fundamental local search algorithm, the hill-climbing algorithm searches for the global maximum of a function $L(x,y)$ by making the optimal choice based on the gradient at each $(x,y)$ \cite{artificial intelligence}\relax. We consider a discrete version of the hill-climbing algorithm by restricting the search to an $m \times n$ integer lattice, and the neighborhood of each point $(i,j)$ in the lattice to be the two horizontal and two vertical neighbors. We further assume that each function value on the lattice is unique, so the values can be ordered from $1$ to $mn$. The steepest ascent hill-climbing algorithm determines the direction of the greatest increase and moves to the adjacent lattice point in that direction. This algorithm terminates when reaching a local maximum.

We define a graph theoretic representation of the algorithm by letting the vertex set $V$ be an $m \times n$ integer lattice and the directed edge set $E$ represent the direction of the greatest increase from each vertex. Specifically, for each vertex $V_i \in V$, let $v_i \in \{1,2,\ldots, mn \}$ denote the label of vertex $V_i$.   
For $W \subset V$, denote by $N(W)$ the set that contains vertices in set $W$ and neighbors of all vertices in $W$, where we define neighbors as vertices at unit Euclidean distance away, i.e., one unit to the north, south, east and west in the lattice. (Note that throughout this paper, we use {\em neighbor} to refer to proximity in the lattice, rather than adjacency in the digraph.) 

Let $n(W)$ be the set of all labels of vertices in $N(W)$. We assume that all $mn$ labels are distinct.
Then the directed edge $(V_i, V_j) \in E$ if and only if $v_j = \max (n(V_i))$ and $i \neq j$.  The directed graph $G = \{V, E \}$ with these restrictions is a Greatest Increase Grid (GIG) Digraph, as introduced by Chester, et al.\ \cite{enum paper}\relax and further characterized by Allen et al.\ \cite{main}\relax. 

We employ coordinate notation for vertices when needed to conveniently refer to a specific vertex in the lattice. In this notation, for integers $1 \leq i \leq m$ and $1 \leq j \leq n$, $V_{i,j} \in V$ denotes the vertex  located at row $i$ and column $j$ of the $m \times n$ lattice. The coordinate notation does not conflict with the single subscript notation; we let the context determine the notation. If outdegree$(V_i) = 0$, then vertex $V_i$ is called a sink. Figure \ref{fig:GIG} shows an example of a $3 \times 3$ GIG digraph. The vertices with labels $6$, $8$, and $9$  ($V_{3,1}$, $V_{3,3}$, and $V_{1,2}$, respectively, in coordinate notation) are sinks.

\begin{figure}[ht]
\centering
\begin{tikzpicture}
[scale=0.8,every node/.style={draw=black,circle}]

    \node (6) at (0,0) {6};
    \node (1) at (1.5,0) {1};
    \node (8) at (3,0) {8};
    \node (4) at (0,1.5) {4};
    \node (7) at (1.5,1.5) {7};
    \node (3) at (3,1.5) {3};
    \node (2) at (0,3) {2};
    \node (9) at (1.5,3) {9};
    \node (5) at (3,3) {5};
    
    \draw[->] (2) to (9);
    \draw[->] (5) to (9);
    \draw[->] (4) to (7);
    \draw[->] (7) to (9);
    \draw[->] (3) to (8);
    \draw[->] (1) to (8);
   
\end{tikzpicture}
\caption{A $3 \times 3$ GIG digraph} \label{fig:GIG}
\end{figure}
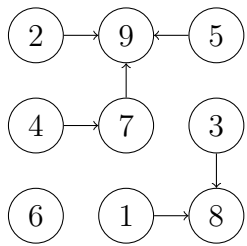

GIG digraphs are special cases of Limited Outdegree Grid (LOG) digraphs \cite{main, enum paper}. A LOG digraph is defined on a lattice like a GIG. Each vertex of a LOG digraph has outdegree at most one. A GIG digraph is a LOG digraph with labels $\{1,2,\ldots mn \}$ and corresponding restrictions on edges based on the labels of vertices. Allen et al.\ introduce algorithms for recognizing GIGs and LOGs and discuss an application to map folding problems \cite{main}\relax. Chester et al.\ focus on properties of subgraphs and enumerate all possible $3 \times 3$ LOG and GIG digraphs \cite{enum paper}\relax. This paper will explore the probabilistic properties of randomly labeled GIG digraphs.

Three properties of the GIG digraph are studied in this paper. In section $2$, we compute the probability of a path connecting two random vertices. In section $3$, we determine some probabilities and statistical properties of sinks. Section $4$ investigates the expected size of a component of vertices. These probabilistic properties may improve the decision making in randomized and adaptive perturbations of iterated local searches like stochastic gradient descent \cite{lourenco}\relax.

\section{Probability of connectedness}

In this section, we compute the probability that a randomly labeled $m \times n$ grid will produce a GIG digraph with a particular directed path. We present two different proofs, one enumerative and one direct probability proof. To simplify notation, we denote $ |N(V_1, \ldots, V_j)|$ by $K_j$.

%
%
%
%

\begin{theorem}\label{N step probability}
Let $G = \{V, E\}$ be an $m \times n$ GIG digraph containing the directed path $(V_1, \ldots, V_i )$. The probability that a randomly labeled GIG digraph contains the directed path $(V_1, \ldots, V_i)$ is $\frac{1}{K_1} \cdot \frac{1}{K_2} \cdot \cdots \cdot \frac{1}{K_{i-1}}$. 
\end{theorem}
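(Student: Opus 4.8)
The plan is to reduce the event ``the path $(V_1,\dots,V_i)$ is present'' to a statement about which vertex carries the largest label inside each of the nested neighbourhoods $N_j := N(V_1,\dots,V_j)$, and then to evaluate the probability by a symmetry-plus-conditioning induction that peels off the outermost neighbourhood. Throughout I would regard a random labeling as a uniform random bijection between the $mn$ vertices and $\{1,\dots,mn\}$, so that the relative order of the labels on any fixed vertex set is uniform over all its orderings; since every edge condition only compares labels inside some $N_j \subseteq N_{i-1}$, the whole event depends only on the relative order of the $K_{i-1}$ labels sitting in $N_{i-1}$. The first step is the characterization: the path is present if and only if, for every $j \in \{1,\dots,i-1\}$, the vertex $V_{j+1}$ carries the largest label in $N_j$. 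One direction is immediate since $N(V_j)\subseteq N_j$ and $V_{j+1}\in N(V_j)$, so being the maximum of $N_j$ forces $v_{j+1}=\max(n(V_j))$; conversely, if all edges are present then labels strictly increase along the path, and as the maximum over $N_j = N(V_1)\cup\cdots\cup N(V_j)$ equals $\max(v_2,\dots,v_{j+1})=v_{j+1}$, the vertex $V_{j+1}$ is indeed the maximum of $N_j$.

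The key structural point I must extract from the hypothesis is a geometric self-avoidance property: $V_{j+1}\notin N_{j-1}$ for each $j$ (with $N_0=\emptyset$). This fails for arbitrary lattice paths --- a path that curls back so that $V_i$ is a lattice-neighbour of $V_1$ would make the claimed product wrong --- but it is forced here because the path is assumed to occur in some GIG. Indeed, if $V_{j+1}$ were a lattice-neighbour of some $V_k$ with $k\le j-1$, then the existing edge out of $V_k$ would give $v_{k+1}=\max(n(V_k))\ge v_{j+1}$, contradicting the strict increase $v_{k+1}<v_{j+1}$ along the path. Since $N_{j-1}$ depends only on lattice positions and not on the labeling, this exclusion is a purely geometric fact valid for every labeling, and in particular every prefix of the path inherits it.

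With these in hand I would compute the probability by induction on $i$, peeling the outermost neighbourhood $N_{i-1}$. By symmetry, the probability that the global maximum of the $K_{i-1}$ labels in $N_{i-1}$ falls on $V_i$ is exactly $1/K_{i-1}$. Conditioned on this, the relative order of the remaining labels is still uniform, and because $V_i\notin N_{i-2}$ by self-avoidance, the relative order on $N_{i-2}$ --- which is all that the conditions for the subpath $(V_1,\dots,V_{i-1})$ depend on --- is unaffected by the conditioning. Hence the conditional probability of the remaining edges equals the unconditional probability for the shorter path, which is $\prod_{j=1}^{i-2}1/K_j$ by the inductive hypothesis, and multiplying yields $\prod_{j=1}^{i-1}1/K_j$. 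The base case $i=2$ is the single statement that $V_2$ carries the maximum of $N_1$, with probability $1/K_1$.

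The main obstacle is the self-avoidance step together with its role in decoupling the conditioning: the tempting one-line argument that ``each edge contributes an independent factor $1/K_j$'' is simply false without it, so the proof has to justify that conditioning on the outer maximum leaves the inner relative order uniform, which is precisely where $V_i\notin N_{i-2}$ is used. A cleaner-looking alternative is the enumerative route hinted at in the text: one counts the orderings of the $K_{i-1}$ labels in $N_{i-1}$ that realize the path by placing labels from largest to smallest, obtaining $K_{i-1}!/(K_1\cdots K_{i-1})$ favourable orderings out of $K_{i-1}!$; this repackages the same conditioning but still rests on the self-avoidance property.
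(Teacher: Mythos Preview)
Your proof is correct and follows essentially the same inductive/conditioning route as the paper's direct probability proof (the second of the two proofs it gives), peeling off the condition ``$V_{j+1}$ is the maximum of $N_j$'' from the outside in. You are in fact more careful than the paper: you explicitly isolate and prove the self-avoidance fact $V_{j+1}\notin N_{j-1}$ from the hypothesis that the path is realized in \emph{some} GIG, and you use it to justify the independence/conditioning step that the paper simply asserts; the paper's enumerative proof also relies on this fact tacitly when it writes $\binom{K_{j}-1}{K_{j-1}}(K_j-K_{j-1}-1)!$.
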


\begin{example} 
Consider the directed path $(V_{2,1}, V_{2,2}, V_{1,2})$ of length 2 in Figure \ref{fig:GIG}. We have the following:

 $N(V_{2,1}) = \{V_{2,1}, V_{1,1} V_{2,2}, V_{3,1}\}$
 
 $K_1= 4$  
 
 $N(V_{2,1}, V_{2,2}) = \{V_{2,1}, V_{2,2}, V_{1,1}, V_{3,1}, V_{1,2}, V_{2,3},V_{3,2}\}$ 
 
 $K_2 = 7$
 
 $P( ( V_{2,1}, V_{2,2}, V_{1,2} ) ) = \frac{1}{4} \cdot \frac{1}{7} = \frac{1}{28}$
\end{example}

\subsection{An enumerative proof of Theorem \ref{N step probability}}

\begin{proof}
Consider the set $N(V_1, \ldots, V_{i-1})$ that contains the first $i-1$ vertices on this directed path and the neighbors of these vertices. There are $K_{i-1}$ vertices in this set with $K_{i-1}$ distinct labels. The formation of the directed path is only contingent on the relative relationship among these $K_{i-1}$ labels. The size of the GIG digraph and the specific values of these labels do not matter. 

Note that vertex $V_i$ is a neighbor of vertex $V_{i-1}$, so $V_i$ is also in $N(V_1, \ldots, V_{i-1})$. Since there is an edge pointing from $V_{i-1}$ to $V_{i}$, $v_i > v_{i-1}$ and $v_i$ is the largest label in $n(V_{i-1})$.  
Similarly, $v_{i-1} >  v_{i-2}$ and $v_{i-1}$ is the largest label in $n(V_{i-2})$. Inductively, we know that the label of $V_i$ is the greatest in $n(V_1, \ldots, V_{i-1})$. 

Other than the largest value in $n(V_1, \ldots, V_{i-1})$, any  values are legitimate ones to label the $K_{i-2}$ vertices in $N(V_1, \ldots, V_{i-2})$. Thus, there are $\binom{K_{i-1}-1}{K_{i-2}}$ ways to choose labels for vertices in set $N(V_1, \ldots, V_{i-2})$. Fixing the labels of these $K_2$ vertices and the label of vertex $V_i$, the other neighbors of $V_{i-1}$ that are not in the set $N(V_1, \ldots, V_{i-2})$ can be labeled in $(K_{i-1}- K_{i-2} - 1)!$ ways.  

In general, for $1 \leq l \leq i-1$, by applying the argument provided above inductively to the set $N(V_1, \ldots, V_{i-l})$, we will have the following analytical formula to enumerate the number of possible labelings that generates the particular path $(V_1, \ldots, V_i)$:
\begin{equation*}
\binom{K_{i-1}-1}{K_{i-2}} \binom{K_{i-2}-1}{K_{i-3}} \cdots \binom{K_{2}-1}{K_{1}} (K_{i-1}- K_{i-2} - 1)! (K_{i-2}- K_{i-3} - 1)! \cdots (K_{1} - 1)!
\end{equation*}

The number of ways to put the $K_{i-1}$ labels on these $K_{i-1}$ vertices is $K_{i-1}!$. The probability that a randomly labeled GIG digraph contains the directed path $(V_1, \ldots, V_i)$ is thus the formula above over  $K_{i-1}!$, which can be easily simplified into $\frac{1}{K_1} \cdot \frac{1}{K_2} \cdot \cdots \cdot \frac{1}{K_{i-1}}$.
\end{proof}

\subsection{A direct probability proof of theorem \ref{N step probability}}
%
%
%
%

\begin{proof}
If there exists a directed path from vertex $V_1$ to vertex $V_2$, then the label $v_2$ is greater than $v_1$ and the labels of other neighbors of $V_1$. The probability that $v_2$ has the greatest value among these labels is $\frac{1}{K_1}$.

Likewise, if $(V_1, V_2, V_3)$ is a directed path in a GIG digraph, then $v_3$ is greater than  $v_2$ and labels of all other neighbors of $v_2$. Similarly, $v_2$ needs to be greater than all labels in $n(V_1) $. Therefore $v_3$ is greater than all labels in $n(\{V_1, V_2\})$.
\begin{align*}
P(v_1 \to v_2 \to v_3) &= P(v_3 =\max( n(\{V_1, V_2 \})) , v_2 = \max( n(V_1))) \\
&= P(v_3 =\max( n(\{V_1,V_2\}))) P( v_2 = \max( n(V_1))) \\
&= \frac{1}{K_2} \cdot \frac{1}{K_1}, 
\end{align*}
where the second line follows because $v_3$ being the largest label in $n(\{V_1, V_2\})$ is independent of $v_2$ being the largest label in $n(V_1)$.

Inductively, the probability that a randomly labeled GIG digraph contains the directed path $(V_1, \ldots, V_i)$ is $\frac{1}{K_1} \cdot  \frac{1}{K_2} \cdot\cdots \cdot \frac{1}{K_{i-1}}$
\end{proof}

\begin{remark}\normalfont
Since the probability of a random path only depends on the number of new neighbors at each step, Theorem \ref{N step probability} can be applied to GIG digraph of any shape and any dimension. 
\end{remark}

\subsection{Connectedness of two vertices}

As a consequence of Theorem \ref{N step probability}, it is easy to see that the probability of a particular path in a randomly labeled GIG digraph is dependent upon its length and perhaps other factors that influence the number of neighbors of the vertices of the path. In this section we demonstrate three properties, one at a time, that affect the probability of a path while leaving the other factors unchanged.  

Figure \ref{fig:5by5GIG} shows an example of an unlabeled $5 \times 5$ GIG digraph with three potential paths from $V_{4,1}$ to $V_{1,4}$. Denote path $(V_{4,1}, V_{4,2}, V_{4,3}, V_{4,4}, V_{3,4}, V_{2,4}, V_{1,4})$ by $P_1$, path $(V_{4,1}, V_{4,2}, V_{4,3}, V_{3,3},$ $V_{3,4}, V_{2,4}, V_{1,4})$ by $P_2$, and path $(V_{4,1}, V_{4,2}, V_{3,2}, V_{3,3}, V_{3,4}, V_{2,4}, V_{1,4})$ by $P_3$. Note that no two of these three paths can exist simultaneously in a labeled GIG digraph.

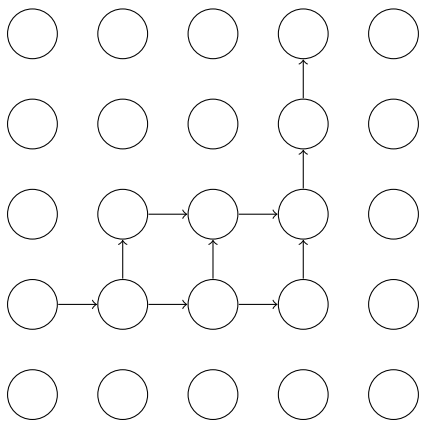
\begin{figure}[ht]
\centering
\begin{tikzpicture}
[scale=.8,every node/.style={draw=black,circle}]

    \node[scale = 1.7] (1) at (0,0) {};
    \node[scale = 1.7] (2) at (1.5,0) {};
    \node[scale = 1.7] (3) at (3,0) {};
    \node[scale = 1.7] (4) at (4.5,0) {};
    \node[scale = 1.7] (13) at (6,0) {};
    \node[scale = 1.7] (5) at (0,1.5) {};
    \node[scale = 1.7] (6) at (1.5,1.5) {};
    \node[scale = 1.7] (7) at (3,1.5) {};
    \node[scale = 1.7] (8) at (4.5,1.5) {};
    \node[scale = 1.7] (14) at (6,1.5) {};
    \node[scale = 1.7] (9) at (0,3) {};
    \node[scale = 1.7] (10) at (1.5,3) {};
    \node[scale = 1.7] (11) at (3,3) {};
    \node[scale = 1.7] (12) at (4.5,3) {};
    \node[scale = 1.7] (15) at (6,3) {};
    \node[scale = 1.7] (16) at (0,-1.5) {};
    \node[scale = 1.7] (17) at (1.5,-1.5) {};
    \node[scale = 1.7] (18) at (3,-1.5) {};
    \node[scale = 1.7] (19) at (4.5,-1.5) {};
    \node[scale = 1.7] (20) at (6,-1.5) {};
    \node[scale = 1.7] (21) at (0,4.5) {};
    \node[scale = 1.7] (22) at (1.5,4.5) {};
    \node[scale = 1.7] (23) at (3,4.5) {};
    \node[scale = 1.7] (24) at (4.5,4.5) {};
    \node[scale = 1.7] (25) at (6,4.5) {};
    
    \draw[->] (1) to (2);
    \draw[->] (2) to (3);
    \draw[->] (3) to (4);
    \draw[->] (2) to (6);
    \draw[->] (3) to (7);
    \draw[->] (4) to (8);
    \draw[->] (6) to (7);
    \draw[->] (7) to (8);
    \draw[->] (8) to (12);
    \draw[->] (12) to (24);
   
\end{tikzpicture}
\caption{Three potential paths in an unlabeled $5 \times 5$ GIG digraph} \label{fig:5by5GIG}
\end{figure}

%
%
%
%

\begin{definition}
In a directed path $(V_1, V_2, \ldots, V_i)$, if edge $(V_{j-1}, V_{j})$ is perpendicular to edge $(V_{j}, V_{j+1})$, then vertex $V_j$ is a turn. 
\end{definition}

\begin{example}
Note that in path $P_1$, edge $(V_{4,3}, V_{4,4})$ is perpendicular to edge $(V_{4,4}, V_{3,4})$. Vertex $V_{4,4}$ is the only turn in $P_1$. 
\end{example}

%
%
\begin{property}\label{more turns higher prob}
A directed path with a particular number of turns is more likely to occur in a randomly labeled GIG digraph than a directed path of the same length with fewer turns.
\end{property}

Let $V_1, V_2, \ldots, V_i$ be a directed path.
For any $2 \leq p \leq i-1$, if $V_p$ is a turn in a path, 
then $V_{p-1}$ and $V_{p+1}$ will share one more neighbor than the case where $V_p$ is not a turn, which implies that the number of new neighbors $(K_{p+1} - K_p)$ is one fewer if $V_p$ is a turn. Therefore, we will witness a decrease of one in the values of $K_{p+1}, K_{p+2}, \ldots, K_{i-1}$, increasing the probability that this directed path exists. 

To illustrate Property \ref{more turns higher prob}, the path $P_2$ in Figure \ref{fig:5by5GIG} has more turns than path $P_1$. Among all the randomly labeled GIG digraphs, the probability that the path $P_1$ exists is $\frac{1}{4} \cdot \frac{1}{7} \cdot \frac{1}{10} \cdot \frac{1}{13} \cdot \frac{1}{15} \cdot \frac{1}{18}$. The probability that the path $P_2$ exists is  $\frac{1}{4} \cdot \frac{1}{7} \cdot \frac{1}{10} \cdot \frac{1}{12} \cdot \frac{1}{14} \cdot \frac{1}{16}$, which is greater than that of path $P_1$.

%
%
%
%
\begin{property}\label{earlier turns}
A directed path with particular locations of turns is more likely to occur in a randomly labeled GIG digraph than a directed path with one or more turns occurring in later locations in the directed path, but all else identical.
\end{property}

Let $n_1, n_2 \in \mathbb{N}$ be such that $n_1 < n_2$. 
Denote two factors in the analytical formula for the probability of the existence of a directed path in a randomly labeled GIG digraph by $\frac{1}{n_1}$ and $\frac{1}{n_2}$. Note that if there is a turn at vertex $V_t$, then the value of the corresponding $K_{t+1}$ will be $1$ less than the case where $V_t$ is not a turn. We will use $n_1 - 1$ and $n_2-1$ to capture the influence of a turn in an earlier part and a later part of a directed path on the probability of existence of such a path. Note that
\begin{align*}
\frac{1}{n_1-1} \cdot \frac{1}{n_2} - \frac{1}{n_1} \cdot \frac{1}{n_2-1} = \frac{n_2 -n_1}{n_1n_2(n_1-1)(n_2-1)} > 0,
\end{align*}  
which implies that an earlier turn in the path is associated with a higher probability of existence for this path.

For instance, although path $P_2$ and path $P_3$ each have two turns, the first turn of path $P_3$ takes place at the second vertex of the path, while the first turn of $P_2$ takes place at the third vertex. The second turn of both paths locates at the fifth vertex. Thus, by Property \ref{earlier turns}, path $P_3$ is more likely to occur in a randomly labeled GIG digraph than path $P_2$. Numbers show that among all the randomly labeled GIG digraphs, the probability that the path $P_3$ exists is $\frac{1}{4} \frac{1}{7} \frac{1}{9} \frac{1}{11} \frac{1}{14} \frac{1}{16}$, which is greater than that of $P_2$.

\begin{property}\label{border vertices}
Holding all other factors constant, a directed path with more vertices on the border of the GIG digraph is more likely to occur in a randomly labeled GIG digraph than a directed path with fewer vertices on the borders. 
\end{property}

If a vertex is on the border or one of the four corners of the GIG digraph, then it  has only three or two neighbors, respectively. Similar to the argument in Property \ref{more turns higher prob}, there will be a decrease of $1$ or $2$ in the value of the corresponding $K_i$ value, which leads to a decrease of each $K_j$ for $j>i$. Since the probability of the existence of the path is the product of the reciprocals of all of the $K_j$ value, such a probability will increase when some $K_i$ decreases.

Based on the above properties, we will be able to find a lower bound for the probability that a given path in a GIG digraph exists. Suppose $V_{i,j}, V_{i',j'}$ are two vertices in a GIG digraph. There are $\binom{|i'-i|+|j'-j|}{|i'-i|}$ possible paths of length $|j'-j|+|i'-i|$ that connect the two vertices. Note that $\binom{|i'-i|+|j'-j|}{|i'-i|}$ is only the number of shortest paths that connect the two vertices. There might exist numerous longer paths that also connect these two selected vertices. However, these cases will be much rarer. Two more steps in a path will dramatically bring down its probability of occurrence. The longer the path, the more significant the decrease is.

Let path $P_l$ denote a directed path of length $|j'-j|+|i'-i|$ from vertex $V_{i,j}$ to $V_{i',j'}$ that has the least probability of existence among all paths with this same shortest length. Denote by $L$ the probability that path $P_l$ occurs in a randomly labeled GIG digraph. We claim that among all the ways to put labels on vertices, the probability that two vertices $V_{i,j}$ and $V_{i',j'}$ are connected with at least one path is greater than or equal to $\binom{|i'-i|+|j'-j|}{|i'-i|} \times L$. Since the probability that the two vertices are connected by a shortest path is greater than or equal to this lower bound, and there might exist some longer paths that connect these two vertices, this bound is indeed a valid lower bound. The path $P_l$ is chosen based on the characteristics of potential shortest paths that connect vertices $V_{i,j}$ and $V_{i',j'}$. The path $P_l$ will possess as many properties discussed above as possible. For instance, it has the minimum number of turns and has turns in later part of the path; vertices of this path will have as many neighbors as possible.

\section{Probabilities and statistical properties of sinks}
In this section, we first look at the probability that several selected vertices in a randomly labeled GIG digraph are sinks. 
We then derive the mean and variance for the number of sinks.
%
%
%
%
%
\subsection{Probability of multiple sinks}
\begin{theorem}\label{multiple sinks}
Let $V_1, \ldots, V_i$ denote $i$ random vertices in an $m \times n$ GIG digraph where their labels follow $v_1 < \cdots < v_i$. The probability that $V_1, \ldots, V_i$ are sinks is $\frac{1}{K_1} \cdot  \frac{1}{K_2} \cdot \cdots \cdot \frac{1}{K_{i}}$.
\end{theorem}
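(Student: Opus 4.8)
The plan is to prove Theorem~\ref{multiple sinks} by induction on $i$, peeling off the vertex $V_i$ of largest label, in the same spirit as the direct-probability proof of Theorem~\ref{N step probability}. I read the statement as computing the joint probability that $V_1,\ldots,V_i$ are all sinks \emph{and} that their labels occur in the prescribed order $v_1<\cdots<v_i$; this is the natural reading, and it is what makes the (order-dependent) product $\prod_k 1/K_k$ correct. Throughout I assume the $i$ vertices are pairwise non-adjacent in the lattice. This is forced: two lattice-neighbors can never both be sinks, since the smaller-labeled one would have a larger-labeled neighbor, so the event is empty whenever two of the $V_k$ are adjacent. As in Theorem~\ref{N step probability}, I would first note that whether each $V_k$ is a sink is determined entirely by the relative order of the labels inside $U := N(V_1,\ldots,V_i)$, because every lattice-neighbor of every $V_k$ already lies in $U$. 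Hence the grid size $mn$ is irrelevant, and I may treat the $K_i$ labels on $U$ as a uniformly random ranking.

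The base case $i=1$ is immediate: $V_1$ is a sink exactly when its label is the largest of the $K_1$ labels in $n(V_1)$, an event of probability $1/K_1$. For the inductive step, the first key observation is that on the event in question the label $v_i$ is in fact the maximum over \emph{all} of $U$, not merely over $n(V_i)$. Indeed, any vertex $w \in U \setminus \{V_i\}$ is either some $V_k$ with $k<i$, whence $v_w = v_k < v_i$ by the prescribed order, or a genuine lattice-neighbor of some $V_k$, whence $v_w < v_k \le v_i$ because $V_k$ is a sink; in both cases $v_w < v_i$. Conversely, once $v_i = \max(n(U))$, the vertex $V_i$ is automatically a sink and automatically carries the largest of $v_1,\ldots,v_i$, so the remaining requirements are precisely that $V_1,\ldots,V_{i-1}$ be sinks with $v_1<\cdots<v_{i-1}$.

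I would then factor the probability by conditioning on $\{v_i = \max(n(U))\}$, which has probability $1/K_i$ since $V_i$ is one of the $K_i$ equally likely candidates for the maximum of $U$. Writing $U' := N(V_1,\ldots,V_{i-1})$, the events ``$V_1,\ldots,V_{i-1}$ are sinks'' and ``$v_1<\cdots<v_{i-1}$'' depend only on the relative order of the labels inside $U'$, which gives, in brief,
\begin{align*}
P\bigl(V_1,\ldots,V_i \text{ sinks},\, v_1<\cdots<v_i\bigr)
&= P\bigl(v_i=\max(n(U))\bigr)\,P\bigl(V_1,\ldots,V_{i-1}\text{ sinks},\, v_1<\cdots<v_{i-1}\bigr)\\
&= \frac{1}{K_i}\cdot \prod_{k=1}^{i-1}\frac{1}{K_k},
\end{align*}
where the second factor is evaluated by the inductive hypothesis applied to $V_1,\ldots,V_{i-1}$, whose cumulative neighborhood is exactly $U'$ with $|U'| = K_{i-1}$.

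The step that needs the most care, and the main obstacle, is justifying this factorization, i.e.\ that conditioning on $v_i = \max(n(U))$ does not disturb the uniform relative order of the labels on $U'$. This is where pairwise non-adjacency is essential: it guarantees $V_i \notin U'$, so $U' \subseteq U \setminus \{V_i\}$, and conditioning a uniform ranking of $U$ on the location of its maximum (at $V_i$) leaves the relative ranking of every subset avoiding $V_i$ uniform. The potential pitfall is the overlap of neighborhoods, since distinct $V_k$ may share lattice-neighbors and so $N(V_i)$ may meet $U'$; I would stress that this overlap is harmless, because the argument only requires $V_i$ \emph{itself} to lie outside $U'$, not that the neighborhoods be disjoint. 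The shared neighbors are simply counted once in $K_i$ and introduce no extra dependence once the maximum of $U$ is pinned at $V_i$. An alternative enumerative proof, mirroring the first proof of Theorem~\ref{N step probability}, should also be available by counting the rankings of $U$ in which each $V_k$ is a local maximum and $v_1<\cdots<v_i$, but the inductive conditioning argument is cleaner and makes the role of non-adjacency transparent.
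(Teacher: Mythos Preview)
Your argument is correct, and in fact it is the direct-probability analogue of the paper's enumerative proof, in just the way that Section~2.2 relates to Section~2.1. The paper proves Theorem~\ref{multiple sinks} only enumeratively: it observes that $v_i$ must be the maximum label on $N(V_1,\ldots,V_i)$, chooses which $K_{i-1}$ of the remaining $K_i-1$ labels go on $N(V_1,\ldots,V_{i-1})$, multiplies by the $(K_i-K_{i-1}-1)!$ arrangements of the leftover neighbors of $V_i$, iterates down to $K_1$, and divides by $K_i!$. Your induction with conditioning on $\{v_i=\max(n(U))\}$ reaches the same product, but it makes two things explicit that the paper leaves implicit: first, the pairwise non-adjacency hypothesis (needed so that $V_i\notin U'$ and so that the event is nonempty at all); second, the independence step, namely that pinning the maximum of a uniform ranking at $V_i$ leaves the relative order on any subset avoiding $V_i$ uniform, regardless of how the neighborhoods overlap. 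Your reading of the statement as the joint probability of ``all sinks \emph{and} $v_1<\cdots<v_i$'' is also the correct one, as the paper's subsequent remark (summing over the $i!$ orderings) confirms.
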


\begin{proof}
We will follow a similar proof to that of Theorem \ref{N step probability}. Since $V_1, \dots, V_i$ are sinks and $v_1 < \cdots < v_i$, sink $V_{i}$ has the largest label among vertices in set $N(V_1, \ldots, V_i)$. Any other values are valid ones to label the other $K_i - 1$ vertices in set $N(V_1, \ldots, V_i)$. Thus, there are $\binom{K_i-1}{K_{i-1}}$ ways to choose labels for vertices in set $N(V_1, \ldots, V_{i-1})$. Fixing the labels of these $K_{i-1}$ vertices and the label of vertex $V_i$, there will be $(K_i- K_{i-1} - 1)!$ ways to assign labels to other neighbors of $V_{i}$ that are not in the set $N(V_1, \ldots, V_{i-1})$.  

For $1 \leq l \leq i$, let set $N(V_1, \ldots, V_{i-l})$ be the set that contains the $i-l$ sinks with the smallest $i-l$ labels and the neighbors of these sinks. Applying the argument provided above inductively to the set $N(V_1, \ldots, V_{i-l})$ for all possible values of $l$, we will have the following analytical formula to enumerate the number of possible labellings such that $V_1, \ldots, V_i$:
\begin{equation*}
\binom{K_i-1}{K_{i-1}} \binom{K_{i-1}-1}{K_{i-2}} \cdots \binom{K_2-1}{K_1} (K_i- K_{i-1} - 1)! (K_{i-1}- K_{i-2} - 1)! \cdots (K_1 - 1)!
\end{equation*}

The number of ways to put the $K_i$ labels on these $K_i$ vertices is $K_i!$. The probability that $V_1, \ldots, V_i$ in a randomly labeled GIG digraph are sinks is thus the formula above over  $K_i!$, which can be easily simplified into $\frac{1}{K_1} \cdot \frac{1}{K_2} \cdot \cdots \cdot  \frac{1}{K_{i}}$.
\end{proof}

\begin{figure}[ht]
\centering
\begin{tikzpicture}
[scale=0.8,every node/.style={draw=black,circle}]

    \node[scale = 1.7] (1) at (1.5,0) {};
    \node[scale = 1.7, fill] (8) at (3,0) {};
    \node[scale = 1.7] (10) at (4.5,0) {};
    \node[scale = 1.7] (4) at (0,1.5) {};
    \node[scale = 1.7, fill] (7) at (1.5,1.5) {};
    \node[scale = 1.7] (3) at (3,1.5) {};
  
    \node[scale = 1.7] (9) at (1.5,3) {};
    
\end{tikzpicture}
\caption{Two Sinks and Their Neighbors} \label{fig:Sinks}
\end{figure}
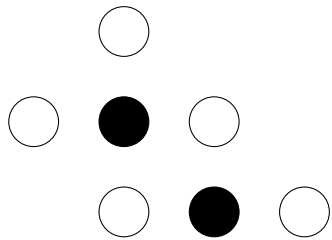

\begin{remark}\normalfont
With Theorem \ref{multiple sinks}, the probability that $i$ vertices are sinks can be calculated by determining $K_1, K_2, \ldots, K_i$ for each of the possible descending orders of these $i$ vertices and summing up the $i!$ possible descending orders of these $i$ vertices. For example, Figure \ref{fig:Sinks} is a reduced GIG digraph that depicts all neighbors of the two black vertices. We would like to know the probability that the two black vertices are sinks.

Denote the black vertex on the upper left by $V_a$ and the black vertex on the lower right by $V_b$. Note that $|N(V_a)| = 5$, $|N(V_b)| = 4$, and $|N(V_a, V_b)| = 7$. When $v_a > v_b$, the probability that a both black vertices are sinks is $\frac{1}{5} \cdot \frac{1}{7}$; when $v_b > v_a$, such a probability is $\frac{1}{4} \cdot \frac{1}{7}$. Thus, the probability that both $V_a$ and $V_b$ are sinks is $\frac{1}{35} + \frac{1}{28}$. Notice that the descending order of the labels of $V_a$ and $V_b$ matters when determining the probability that these vertices are sinks. We cannot simply calculate the probability of one particular descending order of labels and multiply it by the number of possible descending orders.  

\end{remark}

%
%
%
%
\begin{proposition} \label{two sinks independent}
Vertex $V_a$ is a sink and vertex $V_b$ is a sink are independent events if and only if the Euclidean distance between $V_a$ and $V_b$ is more than 2.
\end{proposition}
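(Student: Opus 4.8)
The plan is to reduce the probabilistic independence to a purely combinatorial condition on the neighborhoods, and then translate that condition into the geometric statement about distance. Throughout I would write $\alpha = |N(V_a)|$, $\beta = |N(V_b)|$, $\gamma = |N(V_a, V_b)|$, and $s = |N(V_a) \cap N(V_b)|$, so that inclusion--exclusion gives $\gamma = \alpha + \beta - s$. The single-vertex specialization of Theorem \ref{multiple sinks} supplies the marginals $P(V_a \text{ is a sink}) = 1/\alpha$ and $P(V_b \text{ is a sink}) = 1/\beta$, since a vertex is a sink exactly when it carries the largest label in its own neighborhood.

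For the joint probability I would sum Theorem \ref{multiple sinks} over the two orderings of the labels $v_a, v_b$, as in the Remark following that theorem. One ordering contributes $\frac{1}{|N(V_b)|}\cdot\frac{1}{|N(V_a,V_b)|}$ and the other $\frac{1}{|N(V_a)|}\cdot\frac{1}{|N(V_a,V_b)|}$, so that
\[
P(V_a,V_b \text{ both sinks}) = \frac{1}{\gamma}\left(\frac{1}{\alpha}+\frac{1}{\beta}\right) = \frac{\alpha+\beta}{\alpha\beta\gamma}.
\]
Comparing with the product of marginals $1/(\alpha\beta)$, the two events are independent if and only if $\frac{\alpha+\beta}{\gamma} = 1$, i.e.\ $\gamma = \alpha + \beta$, i.e.\ $s = 0$. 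The only situation this formula does not cover is $V_a$ and $V_b$ adjacent (distance $1$): then they cannot both be sinks, so the joint probability is $0$ while the marginals are positive, making the events dependent; this is consistent, since adjacency forces $V_a\in N(V_b)$ and hence $s\ge 1$. Thus in every case the two sink events are independent if and only if $N(V_a) \cap N(V_b) = \emptyset$.

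It then remains to prove the geometric lemma that $N(V_a)\cap N(V_b)=\emptyset$ exactly when the Euclidean distance $d(V_a,V_b)$ exceeds $2$. For the direction $d>2 \Rightarrow$ disjoint I would use the triangle inequality: any common element $w$ satisfies $d(V_a,V_b)\le d(V_a,w)+d(w,V_b)\le 1+1=2$, and $V_a\in N(V_b)$ would force $d=1$, so a distance exceeding $2$ leaves the neighborhoods disjoint no matter where the vertices sit in the grid. For the converse I would enumerate the integer displacement vectors $(\Delta i,\Delta j)\neq(0,0)$ with $(\Delta i)^2+(\Delta j)^2\le 4$: the adjacency vectors (distance $1$), the diagonal vectors with $|\Delta i|=|\Delta j|=1$ (distance $\sqrt2$), and the vectors with $\{|\Delta i|,|\Delta j|\}=\{2,0\}$ (distance $2$), exhibiting an explicit shared neighbor in each non-adjacent case.

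The step I expect to be most delicate is the boundary bookkeeping in this converse: the witnessed shared neighbor must actually lie inside the $m\times n$ lattice to contribute to $s$. This works out because the common neighbor always lies on the segment joining $V_a$ and $V_b$ — the midpoint $V_{i+1,j}$ for the distance-$2$ vector $(2,0)$, and a square corner such as $V_{i+1,j}$ for the diagonal vector $(1,1)$ — and convexity of the lattice guarantees that if both endpoints are present then so is this intermediate point. Hence $s\ge 1$ whenever $1<d(V_a,V_b)\le 2$ as well as $s\ge 1$ when $d=1$, giving the equivalence $s=0 \Leftrightarrow d(V_a,V_b)>2$, which combined with the probabilistic reduction proves the proposition.
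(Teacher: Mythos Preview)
Your argument is correct and mirrors the paper's: compute the joint sink probability by summing Theorem~\ref{multiple sinks} over the two label orderings and observe that it equals the product $\frac{1}{\alpha\beta}$ of the marginals precisely when $\gamma=\alpha+\beta$, i.e.\ when the closed neighborhoods are disjoint. You actually go further than the paper's own proof, which only verifies the implication ``distance ${}>2\Rightarrow$ independent''; your reduction of independence to $s=0$, your separate handling of the adjacent case, and your geometric lemma (including the boundary check that the exhibited common neighbor lies between $V_a$ and $V_b$ and hence inside the lattice) together supply the converse that the paper leaves unproved.
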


\begin{proof}
To simplify notation, we denote the number of vertices in the neighborhood of a single vertex $V_a$, $|N(V_a)|$, by $k_a$. If the Euclidean distance between $V_a$ and $V_b$ is more than $2$, then they do not share any neighbors. By Theorem \ref{multiple sinks}, the probability that $V_a$ is a sink is $\frac{1}{k_a}$, the probability that $V_b$ is a sink is $\frac{1}{k_b}$. Since $V_a$ and vertex $V_b$ do not share any neighbors, the probability that both $V_a$ and $V_b$ are sinks is $ \frac{1}{k_a} \cdot \frac{1}{k_a + k_b} +  \frac{1}{k_b} \cdot \frac{1}{k_a + k_b}$. Note that
\[
\left( \frac{1}{k_a} + \frac{1}{k_b} \right) \frac{1}{k_a + k_b}  = \frac{k_a + k_b}{k_a k_b}  \frac{1}{k_a + k_b} = \frac{1}{k_a} \frac{1}{k_b}.
\]

This completes the proof. 
\end{proof}

\subsection{Mean and variance of the number of sinks}
%
%
%
%
\begin{theorem}
The expected value of the number of sinks in an $m \times n$ GIG digraph is $\frac{mn}{5} + \frac{m+n}{10} + \frac{2}{15}$. This formula applies to cases where $m \geq 3$ and $n \geq 3$.
\end{theorem}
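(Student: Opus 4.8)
The plan is to compute the expectation via linearity, reducing the problem to the single-vertex sink probability supplied by Theorem \ref{multiple sinks}. For each vertex $V_a$ I would define the indicator $X_a$ that equals $1$ when $V_a$ is a sink and $0$ otherwise, so that the number of sinks is $S = \sum_a X_a$ and $E[S] = \sum_a P(V_a \text{ is a sink})$. Applying Theorem \ref{multiple sinks} with $i = 1$ gives $P(V_a \text{ is a sink}) = \frac{1}{K_1} = \frac{1}{k_a}$, where $k_a = |N(V_a)|$ counts $V_a$ together with its lattice neighbors. The crucial simplification is that $k_a$ depends only on the position of $V_a$ in the grid, not on the labeling, so the sum collapses to a weighted count of vertex positions.

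Next I would partition the $mn$ vertices according to their neighborhood size. When $m \geq 3$ and $n \geq 3$, every vertex falls into exactly one of three classes: the four corners, each with $k_a = 3$; the non-corner border vertices, each with $k_a = 4$, of which there are $2(m-2) + 2(n-2) = 2(m+n) - 8$; and the interior vertices, each with $k_a = 5$, of which there are $(m-2)(n-2)$. The hypotheses $m, n \geq 3$ are exactly what is needed to make these three classes disjoint and to guarantee that each interior vertex has its full complement of four neighbors and each border vertex its full three.

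Summing the reciprocals over the three classes then yields
\begin{equation*}
E[S] = 4 \cdot \frac{1}{3} + \bigl(2(m+n) - 8\bigr)\frac{1}{4} + (m-2)(n-2)\frac{1}{5}.
\end{equation*}
The remaining work is purely algebraic: placing the three terms over the common denominator $30$, expanding $(m-2)(n-2) = mn - 2m - 2n + 4$, and collecting like terms produces $\frac{6mn + 3(m+n) + 4}{30}$, which is precisely $\frac{mn}{5} + \frac{m+n}{10} + \frac{2}{15}$.

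I do not anticipate a genuine obstacle here; the only subtlety is confirming that the position-based case analysis is exhaustive and non-overlapping, which is guaranteed by $m, n \geq 3$, and the arithmetic is routine. The conceptual heart of the argument is simply that linearity of expectation lets us bypass the complicated dependence among sink events (cf.\ Proposition \ref{two sinks independent}) and reduce everything to the elementary single-vertex probability.
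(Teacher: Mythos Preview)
Your proposal is correct and follows essentially the same approach as the paper: both introduce indicator variables for each vertex, invoke linearity of expectation, apply the single-vertex case of Theorem \ref{multiple sinks} to get sink probabilities $\tfrac{1}{3}$, $\tfrac{1}{4}$, $\tfrac{1}{5}$ for corner, border, and interior vertices respectively, and then sum over the three position classes. The only cosmetic difference is that you carry the algebraic simplification through explicitly, whereas the paper simply states the unsimplified sum and its closed form.
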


\begin{proof}
Let $X$ denote the number of sinks in an $m \times n$ GIG digraph, and let $X_i$ be the number of sinks at vertex $V_i$. Note that $X_i$ equals to either $0$ or $1$. The four vertices in the four corners have $2$ neighbors. Vertices in the first and last row, first and last column have $3$ neighbors. The remaining non-border vertices each have $4$ neighbors. By Theorem \ref{multiple sinks}, the probability that each of the above types of vertices is a sink is $\frac{1}{3}$, $\frac{1}{4}$ and $\frac{1}{5}$ respectively. Note that
\begin{equation}\label{ex px}
E(X_i) = 0 \times P(X_i = 0) + 1 \times P(X_i = 1) = P(X_i),
\end{equation}
\begin{equation}\label{ex sum}
E(X) = E(\displaystyle \sum_{i=1}^{mn} X_i) = \sum_{i=1}^{mn} E(X_i) = \sum_{i=1}^{mn} P(X_i).
\end{equation}

Therefore, the expected value for the number of sinks in an $m \times n$ GIG digraph is the sum of each vertex's probability to be a sink. The resulting expected value is 
\[ \frac{(m-2)(n-2)}{5} + \frac{2(m-2) + 2(n-2)}{4} + \frac{4}{3} = \frac{mn}{5} + \frac{m+n}{10} + \frac{2}{15}\].
\end{proof}
%
%
%
%
\begin{theorem}
The variance of the number of sinks in an $m \times n$ GIG digraph is $\frac{13mn}{225} + \frac{m+n}{150} + \frac{52}{1575}$. This formula applies to cases where $m \geq 6$ and $n \geq 6$.
\end{theorem}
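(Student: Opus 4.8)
The plan is to use the indicator decomposition of the variance. Reusing the paper's notation, let $X_i\in\{0,1\}$ be the indicator that $V_i$ is a sink, so that $X=\sum_{i=1}^{mn}X_i$ and
\[
\operatorname{Var}(X)=\sum_{i=1}^{mn}\operatorname{Var}(X_i)+\sum_{i\neq j}\operatorname{Cov}(X_i,X_j).
\]
The first sum is immediate. Each $X_i$ is Bernoulli with $P(X_i=1)=\tfrac{1}{k_i}$, where $k_i=|N(V_i)|\in\{3,4,5\}$ according to the corner/edge/interior classification used in the expected-value proof, so $\operatorname{Var}(X_i)=\tfrac{1}{k_i}-\tfrac{1}{k_i^2}$. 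Summing over the $4$ corners ($k=3$), the $2(m-2)+2(n-2)$ edge vertices ($k=4$), and the $(m-2)(n-2)$ interior vertices ($k=5$) gives a closed form in $m$ and $n$.

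The covariance sum is the heart of the argument. By Proposition~\ref{two sinks independent}, $\operatorname{Cov}(X_i,X_j)=0$ whenever the Euclidean distance between $V_i$ and $V_j$ exceeds $2$, so only three geometric relations survive: lattice-adjacent pairs (distance $1$), diagonal pairs (distance $\sqrt2$), and straight distance-$2$ pairs. For a pair that are \emph{not} lattice-neighbors of one another (the diagonal and distance-$2$ cases), I would apply Theorem~\ref{multiple sinks} to both label orderings to obtain $P(V_a,V_b\text{ both sinks})=\tfrac{1}{K_2}\bigl(\tfrac{1}{k_a}+\tfrac{1}{k_b}\bigr)$ with $K_2=|N(V_a,V_b)|$, and hence
\[
\operatorname{Cov}(X_a,X_b)=\frac{k_a+k_b-K_2}{k_a k_b K_2}=\frac{|N(V_a)\cap N(V_b)|}{k_a k_b K_2}>0.
\]
The case needing genuine care is a lattice-adjacent pair: here each vertex lies in the other's neighborhood, so both being sinks would force $v_a>v_b$ and $v_b>v_a$ simultaneously. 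Thus $P(\text{both sinks})=0$ and $\operatorname{Cov}(X_a,X_b)=-\tfrac{1}{k_a k_b}<0$. Blindly applying the two-sink formula to adjacent pairs (rather than recording this sign reversal) is the quickest route to a wrong coefficient.

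With the covariance formula in hand, I would classify every contributing pair by its geometric relation and by the positions of its two vertices relative to the boundary (which fix $k_a$, $k_b$, the number of shared neighbors, and thus $K_2$), then multiply each covariance value by the number of pairs of that type. A leading-order check is reassuring: for generic interior pairs one finds $\operatorname{Cov}=-\tfrac{1}{25}$ (adjacent), $+\tfrac{1}{100}$ (diagonal), and $+\tfrac{1}{225}$ (distance-$2$), with roughly $2mn$ unordered pairs of each type; combined with $\sum_i\operatorname{Var}(X_i)\approx\tfrac{4mn}{25}=\tfrac{36mn}{225}$, the negative adjacency covariances nearly cancel the large single-vertex variance and leave exactly the announced leading coefficient $\tfrac{13mn}{225}$.

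The main obstacle is the boundary bookkeeping that produces the exact lower-order terms $\tfrac{m+n}{150}$ and $\tfrac{52}{1575}$. Along each edge and at each corner the values $k_a$, $k_b$, $K_2$ deviate from their interior values, so one must enumerate every pair type that touches the boundary, evaluate its (signed) covariance, and count its multiplicity without double counting. This is precisely where the hypothesis $m,n\geq 6$ is used: it guarantees that the width-$2$ collar in which all boundary effects live does not wrap around, so configurations near opposite edges or distinct corners never interact and each boundary pair type has a clean, position-independent count ($O(m+n)$ along edges, $O(1)$ at corners). Once every type is tabulated, I would sum the single-vertex variances together with the covariance contributions and simplify to $\tfrac{13mn}{225}+\tfrac{m+n}{150}+\tfrac{52}{1575}$.
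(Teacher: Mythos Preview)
Your proposal is correct and follows essentially the same approach as the paper: decompose the variance into $\sum_i\operatorname{Var}(X_i)+\sum_{i\neq j}\operatorname{Cov}(X_i,X_j)$, note that covariances vanish beyond Euclidean distance $2$, split the remaining pairs into the adjacent case (joint probability $0$, hence $\operatorname{Cov}=-\tfrac{1}{k_ik_j}$) and the one- and two-shared-neighbor cases (joint probability from Theorem~\ref{multiple sinks}), and then sum over the boundary case analysis. Your write-up is in fact more explicit than the paper's on why adjacent vertices cannot both be sinks, on the leading-term sanity check, and on the role of the hypothesis $m,n\geq 6$.
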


\begin{proof}
Let $X$ denote the number of sinks in an $m \times n$ GIG digraph, and let $X_i$, $X_j$ denote the number of sinks at vertices $V_i$ and $V_j$. Note that the variance of the sum of indicator random variables is
\begin{equation*}
Var(X) = \sum_{i=1}^{mn} Var(X_i) + \sum_{j=1}^{mn}  \sum_{i \neq j}^{mn} Cov(X_i, X_j).
\end{equation*}
Since each individual $X_i$ follows a Bernoulli distribution, $Var(X_i) = p(1-p)$, where $p = P(X_i = 1)$. Thus, $Var(X_i)$ equals $\frac{2}{9}$, $\frac{3}{16}$ or $\frac{4}{25}$ if the vertex has $2$, $3$ or $4$ neighbors, respectively. Note that the $4$ vertices in the corner have $2$ neighbors, the $2 \times (m-2+n-2)$ vertices in the first and last row, and first and last column have $3$ neighbors and the remaining $(m-2)(n-2)$ vertices have $4$ neighbors.

Next we will look at the covariances between each pair of vertices in the grid. Note that
\begin{equation*}
Cov(X_i, X_j) = E(X_i X_j) - E(X_i)E(X_j)
\end{equation*}
\begin{equation*}
E(X_i X_j) = \sum_{i = 0}^{1} \sum_{j = 0}^{1} X_i X_j f(X_i,X_j) =P(X_i=1,X_j=1).
\end{equation*}
In other words, $E(X_i X_j)$ is the probability that both $X_i$ and $X_j$ are sinks. Note that the value of $E(X_i X_j)$ varies across four cases. If $V_i$ and $V_j$ do not share any neighbors, then $V_i$ is a sink and $V_j$ is a sink are independent events, which implies a zero covariance. By remark \ref{multiple sinks}, $E(X_i X_j)$ in the other three cases equals
\begin{enumerate}[label=(\roman*)]
\item $\frac{1}{k_i+k_j-1} \left( \frac{1}{k_i} + \frac{1}{k_j} \right)$ if $V_i$ and $V_j$ share $1$ neighbor. 
\item $\frac{1}{k_i+k_j-2} \left( \frac{1}{k_i} + \frac{1}{k_j} \right)$ if $V_i$ and $V_j$ share $2$ neighbors.
\item $0$ if $V_i$ and $V_j$ are neighbors.
\end{enumerate}
Thus, the covariance of the above three cases will be $\frac{1}{(k_i+k_j-1)k_i k_j}$, $\frac{2}{(k_i+k_j-2) k_i k_j}$ and $-\frac{1}{k_i k_j}$ respectively.

For each vertex in the GIG digraph, we add its variance and its covariance with all other vertices on the grid. Then we sum these values for all vertices on the grid. The result is $\frac{13mn}{225} + \frac{m+n}{150} + \frac{52}{1575}$.
\end{proof}

\section{Expected Component Size}
%
%
%
%
In this section, we consider partitioning the GIG digraph into components, where each component consists of a sink and the set of vertices with a directed path to that sink. For example, Figure \ref{fig:GIG_two} shows an unlabeled $2 \times 3$ GIG digraph with two sinks:  $V_{1,2}$ and $V_{2,3}$. One component consists of $V_{1,2}$ and the four vertices with directed paths to it. The other component is $V_{2,3}$ alone. We are interested in the expected size of a component in a randomly labeled GIG digraph. Properties of the components will add to our understanding of the GIG digraph and the potential for applications to global optimization algorithms.

\begin{figure}[ht]
\centering
\begin{tikzpicture}
[scale=0.8,every node/.style={draw=black,circle}]

    \node[scale = 1.7] (4) at (0,0){};
    \node[scale = 1.7] (7) at (1.5,0) {};
    \node[scale = 1.7] (3) at (3,0) {};
    \node[scale = 1.7] (2) at (0,1.5) {};
    \node[scale = 1.7] (9) at (1.5,1.5) {};
    \node[scale = 1.7] (5) at (3,1.5) {};
    
    \draw[->] (2) to (9);
    \draw[->] (5) to (9);
    \draw[->] (4) to (7);
    \draw[->] (7) to (9);

\end{tikzpicture}
\caption{An unlabeled $2 \times 3$ GIG digraph with two components} \label{fig:GIG_two}
\end{figure}
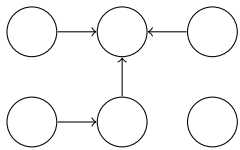

\begin{theorem}\label{convergence}
In an $m \times n$ GIG digraph, let $M = \max(m,n)$. The expected size of each component in such a GIG digraph is bounded above by 
\[
\sum_{n = 1}^{M} 4n \sum_{l = n}^{M^2} \binom{l}{\lceil \frac{l}{2} \rceil} \prod_{i = 1}^{l} \frac{1}{2+i}.\]
\end{theorem}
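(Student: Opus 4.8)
The plan is to reduce the expected component size to a sum of path probabilities and then control those probabilities with Theorem \ref{N step probability}. Fix a vertex $S$ and let $C_S$ be its component, defined to be $\{V : \text{a directed path } V \to S \text{ exists}\}$ when $S$ is a sink and empty otherwise. Because a GIG digraph is a LOG digraph, every vertex has outdegree at most one, so iterating the out-edge from any vertex $V$ traces a unique maximal directed path; in particular at most one directed path from $V$ to $S$ can occur, and $V$ lies in $C_S$ exactly when such a path exists and $S$ is a sink. Writing $q_V$ for the probability that a directed path from $V$ to $S$ exists, the event $\{V \in C_S\}$ is contained in that path-existence event, so
\[
E[\,|C_S|\,] \;=\; \sum_{V} P(V \in C_S) \;\le\; \sum_{V} q_V .
\]
Since the distinct directed paths from $V$ to $S$ are mutually exclusive realizations in a functional graph, $q_V$ is the sum over all directed lattice paths from $V$ to $S$ of the path probability supplied by Theorem \ref{N step probability}.

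First I would bound the probability of a single path. For any directed path, $K_1 = |N(V_1)| \ge 3$, since every lattice vertex has at least two neighbors. I then claim that appending a vertex always enlarges the neighborhood, i.e.\ $K_{j+1} \ge K_j + 1$; granting this, $K_j \ge j+2$ for all $j$, and Theorem \ref{N step probability} gives, for every directed path of length $l$ (that is, with $l$ edges),
\[
\prod_{j=1}^{l}\frac{1}{K_j} \;\le\; \prod_{i=1}^{l}\frac{1}{2+i}.
\]
This is the largest possible value of a length-$l$ path probability, realized in the limiting case of a corner start whose neighborhood grows by exactly one at each step.

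Next I would count paths and organize the sum geometrically. Group the vertices $V$ by their lattice ($L^1$) distance $n$ from $S$: at most $4n$ vertices lie at distance $n$, and such a vertex can reach $S$ only along a path of length $l \ge n$. I would bound the number of directed lattice paths of length $l$ between two fixed vertices by the central binomial coefficient $\binom{l}{\lceil l/2\rceil}$; for a shortest path this is exactly the binomial coefficient $\binom{n}{a}$ counting the interleavings of the two step directions, which is at most $\binom{n}{\lceil n/2\rceil}$, and longer meandering paths are absorbed by the same bound once steps are classified as horizontal or vertical. A directed path cannot revisit a vertex, so its length never exceeds the number of vertices $mn \le M^2$, giving $n \le l \le M^2$. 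Assembling the probability bound, the path count, and the layer sizes, and noting that the inner sum is decreasing in its starting index (so vertices at distance exceeding $M$ may be charged to nearer layers, of which the total capacity $\sum_{n=1}^{M} 4n$ already exceeds the vertex count), I obtain
\[
E[\,|C_S|\,] \;\le\; \sum_{n=1}^{M} 4n \sum_{l=n}^{M^2} \binom{l}{\lceil l/2\rceil} \prod_{i=1}^{l}\frac{1}{2+i}.
\]
Every estimate is independent of the location of $S$, so the same bound holds for each component, and hence for the maximum one.

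I expect the main obstacle to be the two combinatorial-geometric estimates rather than the probabilistic bookkeeping. Proving $K_{j+1} \ge K_j + 1$ amounts to ruling out a self-approaching path whose new vertex has all of its lattice neighbors already present; this should follow from self-avoidance and the planarity of the grid, but it requires care at turns and along the border. The sharper difficulty is the path count $\binom{l}{\lceil l/2\rceil}$ for non-shortest paths, since the naive ``at most three choices per step'' estimate only yields $3^{l-1}$; making the central-binomial bound rigorous---for instance by encoding a length-$l$ path through its pattern of horizontal and vertical steps and arguing that the signs are essentially forced by the fixed endpoints---is where the real work lies.
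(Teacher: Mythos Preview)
Your outline is exactly the paper's argument: write the component size as a sum of indicators, bound each indicator by the sum of path probabilities, invoke Theorem~\ref{N step probability} with $K_j\ge j+2$ to get $\prod_{i=1}^{l}\frac{1}{2+i}$, and group vertices into at most $4n$ per $L^1$-layer with paths counted by $\binom{l}{\lceil l/2\rceil}$. The paper does not prove the two estimates you single out either; it simply asserts that ``each vertex in the path adds at least one new neighbor'' and that ``the number of paths of length $l$ is maximized when taking $\lceil l/2\rceil$ vertical steps.'' Your handling of the layer cutoff is actually cleaner than the paper's: the paper pads to an $M\times M$ square and then claims the maximum distance to the sink is $M$, which is off by a factor of two, whereas your monotonicity-plus-capacity argument ($\sum_{n=1}^{M}4n\ge M^2\ge mn$) legitimately absorbs the far vertices.

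On the first gap: your worry about $K_{j+1}\ge K_j+1$ is well placed, because that incremental statement is \emph{false} for spiralling self-avoiding paths. What \emph{is} true (and is what you need) is the cumulative bound $K_j\ge j+2$ for $j\le mn-2$. Since $V_1,\dots,V_{j+1}\subset N(V_1,\dots,V_j)$, equality $K_j=j+1$ would force every neighbor of $V_1,\dots,V_j$ to lie on the path, making $V_{j+1}$ the unique boundary vertex of the set $\{V_1,\dots,V_{j+1}\}$; but for $m,n\ge 2$ the grid is $2$-connected, so any proper subset of size $\ge 2$ has at least two boundary vertices. Thus $K_j\ge j+2$ except possibly at $j=mn-1$, which only affects Hamiltonian paths by a harmless factor $(mn+1)/mn$.

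The second gap is the serious one, and your instinct that ``this is where the real work lies'' is right---but the work may not be doable as stated. Classifying steps as horizontal or vertical gives $\binom{l}{\lceil l/2\rceil}$ patterns, yet the signs are \emph{not} forced by the endpoints once $l$ exceeds the distance, so one pattern can correspond to many self-avoiding walks. In fact the connective constant of $\mathbb{Z}^2$ is $\mu\approx 2.638>2$, so the number of self-avoiding walks of length $l$ between two fixed nearby vertices grows like $\mu^{l}$ up to polynomial factors, eventually outstripping $\binom{l}{\lceil l/2\rceil}\sim 2^{l}/\sqrt{l}$. The convergence statement in the next theorem survives with the cruder but honest bounds $K_j\ge j+1$ and $\le 4\cdot 3^{l-1}$ walks, but the specific inequality displayed in Theorem~\ref{convergence} does not obviously follow from the route you (and the paper) take.
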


\begin{proof}

Let $X$ denote the number of the vertices in a component with a sink $V_s$, and let $X = \sum_{p=1}^{mn} X_p$, where $X_p = 1$ if vertex $V_p$ is located in this component, and $X_p = 0$ otherwise. By Equation (\ref{ex px}) and Equation (\ref{ex sum}), $E(X)$ can be calculated by summing up the probabilities for each vertex in the GIG digraph to reach the sink $V_s$. Because there can be at most one path from $V_i$ to $V_s$, the probability that vertex $V_i$ reaches the sink $V_s$ is less than or equal to the sum over all possible paths between $V_i$ and $V_s$ of the probability of each path. We will calculate the sum of the probability for all vertices in the  GIG digraph to reach the sink $V_s$ along a path. For simplicity in expressing an upper bound, we pad the GIG digraph with additional rows or columns, so it is a square with $M$ rows and columns. 

Note that there are less than or equal to $4n$ vertices at distance $n$ from the sink, and the maximum distance of a vertex from the sink is $M$. Further, no path in the GIG digraph can be longer than $M^2$, and the number of paths of length $l$ is maximized when taking $\lceil \frac{l}{2} \rceil$ vertical steps and  $l - \lceil \frac{l}{2} \rceil$ horizontal steps. Therefore, enumerating the vertices by their distance from the sink, and enumerating the paths by their length, we have
\[ E(X) < \sum_{n=1}^{M} 4n \sum_{l=n}^{M^2} \binom{l}{\lceil \frac{l}{2} \rceil } \textrm{P(a path of length } l). \]

By Theorem \ref{N step probability}, the probability of a path of length $l$ is bounded above by $\prod_{i = 1}^{l} \frac{1}{2+i}$, since each vertex in the path adds at least one new neighbor to the set of neighbors of vertices in the path. Therefore, 
\[ E(X) < \sum_{n=1}^{M} 4n \sum_{l=n}^{M^2} \binom{l}{\lceil \frac{l}{2} \rceil } \prod_{i = 1}^{l} \frac{1}{2+i}. \]

\end{proof}

%
%
%

\begin{theorem}
The expected size of the maximum component in an $m \times n$ GIG digraph converges as the size of the GIG digraph goes to infinity. 
\end{theorem}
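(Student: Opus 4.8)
The plan is to bound the expected size $E(S)$ of the \emph{largest} component by a first-moment (union) estimate over all candidate sinks, and then to argue that the resulting bound is both uniform in $(m,n)$ and monotone, so that a bounded monotone sequence converges. The quick but incomplete route would be to invoke Theorem \ref{convergence} directly, since it bounds the expected size of \emph{each} component by a series that converges as $M \to \infty$; however, that bounds only a single fixed component, whereas the maximum is taken over a number of components that grows with the grid, so I would instead work with the tail sum. Using the same indicator decomposition as in Equations (\ref{ex px}) and (\ref{ex sum}), write $E(S) = \sum_{t \ge 1} P(S \ge t)$. If $N_{\ge t}$ denotes the number of components of size at least $t$, then $\{S \ge t\} = \{N_{\ge t} \ge 1\}$, so $P(S \ge t) \le \min\{1, E(N_{\ge t})\}$, and $E(N_{\ge t}) = \sum_{V} P(\text{the component of } V \text{ has size } \ge t)$, the sum running over all $mn$ vertices that could be a sink.

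First I would bound, for a single vertex $V_s$, the probability that its component reaches size $t$. Because at most $4d$ lattice vertices lie at distance $d$ from $V_s$, a component with at least $t$ vertices must contain a vertex at distance at least about $\sqrt{t/2}$ from $V_s$, hence a directed path to $V_s$ of length at least about $\sqrt{t/2}$. By Theorem \ref{N step probability} together with the factorial bound $\prod_{i=1}^{l}\frac{1}{2+i} = \frac{2}{(l+2)!}$ already used in the proof of Theorem \ref{convergence}, the probability of any such path decays factorially in its length. Summing the path probability over the at most $\binom{l}{\lceil l/2 \rceil}$ paths of each length $l$ and over the at most $4d$ vertices at each distance $d$ — exactly the two geometric counts appearing in Theorem \ref{convergence} — yields an upper bound on $P(\text{component of } V_s \ge t)$ that is independent of $m$ and $n$ and decays rapidly in $t$.

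Next I would assemble these pieces. Multiplying the per-vertex tail bound by the $mn$ choices of sink gives an estimate for $E(N_{\ge t})$, and substituting $\min\{1, E(N_{\ge t})\}$ into the tail sum produces a bound on $E(S)$. The aim is to show this bound is finite and does not grow with $m$ and $n$, the factorial decay from Theorem \ref{N step probability} being what lets the sum over $t$ converge even after the factor of $mn$ is included. Finally, to upgrade boundedness to genuine convergence I would pad the grid to an $M \times M$ square exactly as in Theorem \ref{convergence} and argue that $E(S)$ is nondecreasing in $M$; a bounded nondecreasing sequence converges, which gives the claim.

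The hard part will be the interplay between the $mn$ union bound and the decay of the per-component tail. Factorial decay in the \emph{path length} $l$ translates only into factorial decay in $\sqrt{t}$ for the component size, since a size-$t$ component forces a path of length only about $\sqrt{t}$, and one must check carefully that this decay is still fast enough to dominate the $mn$ candidate sinks uniformly; this is precisely the step where the geometric counts $4d$ and $\binom{l}{\lceil l/2 \rceil}$ have to be balanced against $\frac{2}{(l+2)!}$, and it is where the argument could most easily break down. A secondary obstacle is establishing actual monotonicity of $E(S)$ under the padding embedding, rather than mere boundedness of its supremum, since the theorem asserts convergence of the sequence and not only finiteness of its limit superior.
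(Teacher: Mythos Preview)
The paper's proof is precisely the ``quick but incomplete route'' you set aside: it does nothing more than show that the double series
\[
\sum_{n=1}^{\infty} 4n \sum_{l=n}^{\infty} \binom{l}{\lceil l/2\rceil}\prod_{i=1}^{l}\frac{1}{2+i}
\]
from Theorem~\ref{convergence} is finite, via a ratio test on the inner sum and a geometric comparison on the outer sum. It never distinguishes $\max_s E(|C_s|)$ from $E(\max_s |C_s|)$, and it establishes only uniform boundedness of the upper bound, not convergence of any specific sequence. Both of the gaps you flag in your final paragraph are real, and the paper simply does not address them; its statement should be read as ``the uniform upper bound on the expected size of a component remains finite as $M\to\infty$.''

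Your alternative route is more honest about what ``maximum component'' ought to mean, but the obstacle you anticipate is fatal to it. A component of size $t$ only forces a directed path of length $\ell\gtrsim\sqrt{t/2}$, so the per-vertex tail from Theorem~\ref{N step probability} decays like $\exp(-c\sqrt{t}\log t)$ at best. After the union bound over $mn$ vertices, the tail sum
\[
E(S)\;\le\;\sum_{t\ge 1}\min\{1,\;mn\cdot Ce^{-c\sqrt{t}\log t}\}
\]
has its crossover at $t^*$ of order $(\log mn)^2/(\log\log mn)^2$, and the contribution of the terms below $t^*$ already gives a bound of that order. So the first-moment method as you describe it yields a bound that grows (slowly) with the grid, not a uniform one; you will not be able to complete the argument along these lines without a genuinely new idea (e.g.\ a second-moment or block-independence argument that controls the maximum directly rather than through a union bound). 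Your monotonicity-under-padding step is also not straightforward: enlarging the grid changes which boundary vertices are sinks and can split or merge components, so $E(S)$ is not obviously nondecreasing in $M$.

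In short: the paper takes the easy route and leaves both issues you raise unaddressed; your harder route correctly identifies them but, as you suspect, cannot close the $mn$ gap with a first-moment bound alone.
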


\begin{proof}

We need to show that 
\[
\sum_{n = 1}^{\infty} 4n \sum_{l = n}^{\infty} \binom{l}{\lceil \frac{l}{2} \rceil} \prod_{i = 1}^{l} \frac{1}{2+i} \]
converges. First, we will show that $b_n = \sum_{l = n}^{\infty} \binom{l}{\lceil \frac{l}{2} \rceil} \prod_{i = 1}^{l} \frac{1}{2+i}$ converges for all $n \geq 1$, and that $\frac{b_{n+1}}{b_n} < \frac{2}{3}$ for $n \geq 2$.

The ratio between two consecutive terms in the series $b_n$ is
\begin{align*}
\frac{\binom{l+1}{\lceil \frac{l}{2} \rceil} \prod_{i = 1}^{l+1} \frac{1}{2+i}}
{\binom{l}{\lceil \frac{l}{2} \rceil} \prod_{i = 1}^{l} \frac{1}{2+i}}
= 
\frac{l+1}{\lceil \frac{l+1}{2} \rceil} \frac{1}{3+l}
\leq 
\frac{l+1}{\left( \frac{l+1}{2} \right)} \frac{1}{3+l}
=
\frac{2}{3+l}
<
\frac{2}{l+1}.
\end{align*}
By the ratio test, $b_n$ converges for all $n \geq 1$. 

For fixed $n \geq 2$, let  $A = \binom{n}{\lceil \frac{n}{2} \rceil} \prod_{i = 1}^{n} \frac{1}{2+i} $. Then 
\begin{align*}
b_{n+1} &=  \sum_{l = n+1}^{\infty} \binom{l}{\lceil \frac{l}{2} \rceil} \prod_{i = 1}^{l} \frac{1}{2+i} \\
&<  \frac{2}{n+1} A + \frac{2}{n+1} \frac{2}{n+2}A + \cdots \\
&<  A \sum_{i=1}^{\infty} \left ( \frac{2}{n+1} \right) ^i \\
&= \frac{2A}{n-1} \\
&\leq 2A.
\end{align*}

Then 
\[
\frac{b_{n+1}}{b_n} = \frac{b_{n+1}}{A + b_{n+1}} \leq
\frac{2A}{A + 2A} = \frac{2}{3},
\]

and
\begin{align*}
\sum_{n = 1}^{\infty} 4n b_n &= 4b_1 + \sum_{n=2}^\infty 4n b_n \\
&\leq 4b_1 + \sum_{n=2}^\infty 4n \left( \frac{2}{3} \right)^{n-2} b_2 \\
&= 4b_1 + 48b_2  \\
&< \infty
\end{align*}

Therefore, the expected size of the maximum component in a GIG digraph converges. 

\end{proof}

%
%
%
%
\bibliographystyle{amsplain}

\end{document}